\newtheorem{Theorem}{Theorem}
\newtheorem{Lemma}{Lemma}
\newtheorem{Corollary}{Corollary}
\newtheorem{Proposition}{Proposition}
\newtheorem{Remark}{Remark}
\newtheorem{Assumption}{Assumption}
\def\sqw{\hfill\hbox{\lower.1ex\hbox{$\sqcup$}
		\kern-1.02em\lower.1ex\hbox{$\sqcap$}}\ }
\renewcommand{\vec}[1]{ \mathbf{#1}}
\DeclareMathOperator*{\rank}{rank}
\newcommand{\one}{\mathbf{1}}
\newcommand{\zero}{\mathbf{0}}
\newcommand{\R}{\mathbb{R}}
\newcommand{\hide}[1]{}
\newcommand\abs[1]{\left|#1\right|}
\newcommand{\vecx}[1]{\vec{#1}_{t}=\left \lbrace #1_{1,t}, \ldots, #1_{n,t} \right \rbrace}
\definecolor{mygreen}{rgb}{0.0, 0.5, 0.0}
\newcommand{\N}{\mathcal{N}}
\newcommand{\x}{\mathbf{x}}
\newcommand{\z}{\mathbf{z}}
\newcommand{\e}{\mathbf{e}}
\newcommand{\W}{\mathsf{W}}
\newcommand{\G}{\mathcal{G}}
\newcommand{\V}{\mathcal{V}}
\newcommand{\Ed}{\mathcal{E}}
\newcommand{\F}{\mathcal{F}}
\newcommand{\E}{\mathbb{E}}
\renewcommand{\P}{\mathbb{P}}
\newcommand{\A}{\mathsf{A}}
\newcommand{\D}{\mathsf{D}}
\renewcommand{\L}{\mathsf{L}}
\title{\LARGE \bf
Consensus over evolutionary graphs
}
\author{Michalis Smyrnakis, Nikolaos M. Freris and Hamidou Tembine%
\thanks{The authors are with the Division of Engineering at New York University Abu Dhabi, Saadiyat Island, P.O. Box 129188, Abu Dhabi, UAE. The last two authors are also with NYU Tandon School of Engineering. 
        E-mails: \texttt{\{ms10775, nf47, tembine\}@nyu.edu}. %
        This work was supported by the National Science Foundation (NSF) under grant CCF-1717207, and the U.S. Air Force Office of Scientific Research (AFOSR) under grant FA9550-17-1-0259.
        }
}
\begin{document}
\maketitle
\begin{abstract}
We establish average consensus on graphs with dynamic topologies prescribed by evolutionary games among strategic agents. Each agent possesses a private reward function and dynamically decides whether to create new links and/or whether to delete existing ones in a selfish and decentralized fashion, as indicated by a certain randomized mechanism. This model incurs a time-varying and state-dependent graph topology for which traditional consensus analysis is not applicable. We prove asymptotic average consensus almost surely and in mean square 
for any initial condition and graph topology. In addition, we establish exponential convergence in expectation. Our results are validated via simulation studies on random networks.

\end{abstract}

\begin{keywords} 
Consensus, Evolutionary Games, Evolutionary Graphs, Distributed Algorithms, Randomized Algorithms.
\end{keywords}

\section{Introduction}
\emph{Evolutionary game theory} has been established as a modeling tool for interactions between populations of strategic entities. In specific, evolutionary games describe the population dynamics resulting from pairwise interactions. It has found numerous applications in various areas of multi-agent systems such as in wireless networks~\cite{tembine1,tembine2}, swarm robotics~\cite{sun2015}, and  dynamic routing protocols~\cite{tembine3}.

\emph{Evolutionary graphs} arise as an application of evolutionary game theory in modeling dynamic graph topologies. In such context, a population is organized as a network (graph) with the nodes (vertices) representing atoms (agents) and links (edges) representing interactions among them. This is captured by a weighted 
graph with time-varying edge set determined by an evolutionary game. We will present a specific randomized decentralized mechanism for determining the graph topology based on the individual fitness functions of the agents. In our setting, each node maintains a local variable and computes its fitness function using its own value along with the values from its neighbors (both active and inactive, cf. Sec.~\ref{sec:problem}). Subsequently, it dynamically readjusts its neighbor set by randomly adding or deleting links with probabilities dictated by the resulting change of its fitness function.

\emph{Consensus} is a canonical example of in-network coordination in multi-agent systems. Each agent maintains a local value and the goal is for the entire network to reach agreement to a common value in a \emph{distributed} fashion, i.e., via local exchanges of messages between neighboring (adjacent) agents. An archetypal problem is \emph{average consensus}, where the goal is for each agent to asymptotically compute the average of all nodal values;  cf.~\cite{linear_cons,xiao2004fast,boyd2006randomized,olfati},  for a largely non-exhaustive list of references.    
This theme has proven a prevalent design tool for distributed multi-agent optimization~\cite{nedic2009distributed,nedich2016achieving}, signal processing~\cite{gossip},  numerical analysis~\cite{REK}, and estimation~\cite{RK,RK2}. 

In this paper, we seek to bridge the gap between evolutionary game theory and distributed optimization by studying average consensus over evolutionary graphs. In our setting, each agent has a local variable that represents its `strategy': the strategies evolve following a consensus protocol over a time-varying graph capturing inter-agent cooperations. At each time instant, agents dynamically select the agents with which they cooperate (from a candidate set) based on their utility (i.e., fitness) that depends on their own strategy and the strategies of neighboring agents they intend to cooperate with. Specifically, agents create or drop links (i.e., cooperations) when they deem it beneficial for them, and they do so via a randomized decentralized mechanism, in a selfish manner. Examples enlist social networks~\cite{sn1} and coordination of robot swarms~\cite{sun2015}. 

We consider the problem of average consensus over networks  
of time-varying topology  
captured by a Markov Decision Process (MDP).
Unlike prior work on the subject~\cite{olfati,nedich2016achieving}, the topology \emph{depends on the agents' values}, which renders previous analysis techniques inapplicable in our case. We proceed to establish average consensus a.s. (almost surely) and in m.s. (mean square) sense, using  stochastic Lyapunov techniques. Additionally, we prove that the convergence is exponential in expectation, and provide a lower bound on the expected convergence rate. Finally, our method was empirically assessed via numerical simulations.        

The remainder of the paper is organized as follows: Sec.~\ref{sec:preliminaries} exposes preliminaries on graph theory, consensus and evolutionary graph theory. In Sec.~\ref{sec:problem}, we present the problem formulation. The convergence analysis is presented in Sec.~\ref{sec:proof}.  Sec.~\ref{sec:sim} illustrates simulation results, while Sec.~\ref{sec:conclusions} concludes the paper and discusses future research directions.

\section{Preliminaries}\label{sec:preliminaries}
In this section, we recap essential background on graph theory, consensus protocols and evolutionary games. In the remainder of the paper, we use boldface for vectors and capital letters for matrices. Vectors are meant as column vectors, and  we use $\zero,\one$ to denote the vectors with all entries equal to zero, one, respectively, and $I$ to denote the identity matrix (with the dimension made clear from the context in all cases). 
Last, we use the terms `agent', `vertex' and `node' interchangeably, and the same holds true for `edge' and `link,' in what follows.  

\subsection{Graph theory}

Consider the case of $n$ interacting agents which aim to achieve consensus over a quantity of interest, for instance compute the average of their values. 
Such problem is an instance of \emph{computing on graphs}, where each agent is modeled as a vertex of the graph and edges are drawn only between interacting nodes: we assume that two nodes can interact with each other (for instance, exchange private information) if and only if they are connected, i.e., there is an edge between them in the graph. 

Formally, let a graph be denoted by $\G=(\mathcal{V}, \mathcal{E})$, where $\mathcal{V}$ is the non-empty set of vertices (nodes) and $\mathcal{E}$ is the set of edges. 
In this paper, we restrict attention to \emph{undirected} graphs, that is to say the edge set $\Ed$ consists of unordered pairs: $(i,j)\in \Ed$ implies that agents $i,j$ can interact in a symmetric fashion, i.e., cooperate\footnote{The extension of our methods to directed graphs that model asymmetric interactions (e.g.,  asymmetric reward functions) will be the focal point of future work.}. We further assume that the graph does not contain self-loops (i.e., $(i,i)\notin \mathcal{E}$ for all $i\in \V$); this is without any loss in generality, since edges capture inter-agent interactions in our framework.     
We set $n:=|\V|, m=|\Ed|$, to denote the number of nodes, edges respectively. We say that the graph $\G$ is \emph{connected} if there is a path between any two nodes $i,j\in\V$; otherwise, we say that the graph is disconnected.

The adjacency matrix $\A\in\R^{n\times n}$ of the graph captures connections between the nodes: 
for any two nodes $i,j \in \mathcal{V}$, $a_{ij}$ is defined as:
$$a_{ij}=\left\lbrace \begin{array}{c l}
1, & \textrm{if } (i,j)\in\Ed,\\
0, & \textrm{otherwise.}
\end{array} \right.$$
The definition can be extended to \emph{weighted} graphs, in which case $a_{ij}$ can take an arbitrary 
value if $(i,j)\in\Ed$. For an undirected graph, $a_{ij}=a_{ji}$, for all $i,j\in \V$, i.e., $\A$ is symmetric. Besides, $\A$ has a zero-diagonal ($a_{ii}=0$ for all $i$), since $\G$ is assumed to have no self-loops. The \emph{neighborhood} of a node $i$  contains all nodes that the node has a connection with, and is denoted by $\N_i := \{j:a_{ij}\ne 0\}$. 
The \emph{degree} of node $i$ is defined as the 
number of its neighbors, i.e.,  $d_i:= \abs{\N_i} = \sum_{j\in \V}a_{ij}$. Let $\D$ be the diagonal degree-matrix (i.e., its $(i,i)-$th entry equals $d_i$, and off-diagonal entries are zero). We define 
$\L := \D-\A,$
the \emph{Laplacian} of the graph. Clearly, $\L\in\R^{n\times n}$ is symmetric.  
Additionally, it can be shown that $\L$ is positive semidefinite~\cite{graph_theory}. 
It is well-known~\cite{graph_theory} that $\rank \L = n-k$, where $k$ is the number of connected components of the graph. In particular, a graph is connected if and only if $\rank \L = n-1.$
By its very definition, the Laplacian has a zero eigenvalue with corresponding eigenvector the all-one vector $\one$. In fact, the multiplicity of the zero eigenvalue equals the number of connected components of the graph. 
The second smallest eigenvalue of the Laplacian is called the \emph{Fiedler value} or \emph{algebraic connectivity} of the graph, and is denoted as $\lambda_2(\G):=\lambda_2(\L)$. It is positive if and only if the graph is connected.

\subsection{Consensus}
Each agent $i\in \V$ maintains a local scalar value $x_i$. We call the \emph{state} of the network the vector $\x$ obtained by stacking all nodal values $\{x_i\}_{i\in\V}$. We use $x_{i,t}$ to denote the value of node $i$ at time $t$, and, correspondingly, $\x_t$ for the network state at time $t$. 

A widely studied method which describes the evolution of $x_{i}$ is \emph{linear consensus}~\cite{linear_cons,olfati}.  
The dynamics for $x_{i}$ can be written (up to a multiplicative constant) as
$$\dot{x}_{i,t} = \sum_{j\in \mathcal{N}_{i}} a_{i,j}(x_{j,t}-x_{i,t}),$$
which can further be written compactly in matrix form:
$$\dot{\x}_t = -\L\x_t.$$
For an undirected and connected graph, the network reaches \emph{average consensus}, i.e.,
$$\x_t \underset{t\to\infty}{\longrightarrow} Ave(\x_0)\one,$$
where $Ave(\x_0):=\frac{1}{n}\one^\top\x_0$ is the average of the values at the initial time 0~\cite{olfati}. Besides, the convergence rate is exponential with rate lower-bounded by the Fiedler value~\cite{olfati}. 

In this paper, we will study consensus over \emph{time-varying} graphs as abstracted by a time-varying Laplacian $\L_t$, dictated by agents' randomized decisions; that is to say, $\L_t = \L(\x_t,\Ed_{t^-})$ is a random matrix that depends on both the state at time $t$ and the topology `right before' time $t$;  consequently, the analysis in existing literature~\cite{linear_cons,xiao2004fast,boyd2006randomized,olfati,nedich2016achieving} does not directly carry through.

\subsection{Evolutionary graphs}
An \emph{evolutionary graph} is a graph whose topology is specified by an evolutionary game~\cite {egraph1,egraph2} of a single population with finite number of players, $n$, placed on a graph $\mathcal{G}$. The interactions of players are captured by the edge set of $\mathcal{G}$. Each player $i$ has a set of actions $s_{i} \in S_{i}$ and receives a certain pay-off according to its utility (or fitness) function which is a mapping from the joint action space 
$S:= S_1\times S_2\times \hdots S_n$ 
to the real numbers, $f_{i}:S\rightarrow \mathbb{R}$. 

The evolution of graph topology follows a stochastic process administered through pairwise interactions. In particular, two players (that are allowed to interact) are randomly selected and a ``copy'' of the player with the higher fitness, takes the place of the player with the lower one. As a result, the strategy of the player with the smaller fitness is replaced by the strategy of the player with the higher one. 

\section{Problem formulation}\label{sec:problem}

In a cyberphysical system, such as a wireless sensor network~\cite{mccabe2008controlled,freris2010fundamentals}, it is common that agents may opt to dynamically create new links with other agents or drop existing ones during the coordination process; this behavior results in time-varying graphs. In order to properly describe this process, it is necessary to formulate a dynamic graph, whose structure depends on time, topology and network state. 

In what follows, we define a \emph{dynamic graph} as a graph with fixed predetermined vertex set $\V$ of agents, 
in which the edge set $\Ed$ varies over time, in a \emph{state-dependent randomized} fashion.  In particular, the edge set $\Ed = \Ed(\x_t,t,\omega) \subseteq \V\times\V$ is a state-/time-dependent random set in a probability space $(\Omega,\F,\P)$ (where $\Omega$ is the sample space, $\F$ is the $\sigma-$algebra on $\Omega$, and $\P$ is the probability measure); correspondingly, we define the adjacency matrix $\A= \A(\x_t,t,\omega)$ and the Laplacian matrix $\L = \L(\x_t,t,\omega)$.  In this paper, we focus on time-varying graphs with topology  at time $t$ depending on both the state at time $t$ and the topology `right before' time $t$, i.e.,  
$\L = \L(\x_t,\Ed_{t^-},\omega)$. We use the shorthand notation $\Ed_t,\A_t,\L_t$ and $\G_t = (\V,\Ed_t)$ to emphasize the type-varying aspect. 

For each node $i\in \V$, we denote by $\N_i$ the set of all \emph{feasible neighbors}, i.e., the set of all nodes that node $i$ can potentially create a connection with. Since we focus on undirected graphs, we assume that $ j\in \N_i$ implies that 
$i \in \N_j$. 
 At each time $t$, for any given node $i\in \V$, we denote the set of \emph{active neighbors}, i.e., the set of nodes with which $i$ is connected, using 
$\N_{i,t}^{(1)} \subseteq \N_i$.  Furthermore, we let $\N_{i,t}^{(2)}:=\N_i \setminus  \N_{i,t}^{(1)}$, the set of \emph{inactive neighbors}, i.e., the set of nodes that $i$ is not connected with, but may decide to connect with based on the evolutionary game. 
The degree of a node $i$ at time $t$,  $\abs{\mathcal{N}_{i,t}^{(1)}}$, is the total number of active neighbors of node $i$ at time $t$. For the subsequent treatment, we make the assumption that the graph obtained by taking the union of all feasible neighbor sets is connected:
\begin{Assumption}\label{ass:connectivity}
	The graph $\G' = (\V, \Ed')$, with $\Ed':=\cup_{i\in\V}\cup_{j\in\N_i} \{(i,j)\}$ is connected. 
\end{Assumption}
This condition is necessary for consensus to be achieved: otherwise, the graph will be disconnected at each time regardless of the agents' decisions, with no possible exchange of information across its connected components, which implies that consensus is infeasible. We will establish the sufficiency of the condition for the dynamic topology instructed by the evolutionary game we propose. 

Let $\vecx{x}$ denote the \emph{coordination levels} of the agents, where we will restrict attention (without any loss in generality) to the case that $0 \leq x_{i,t} \leq 1, \forall i \in \mathcal{V},t\ge 0$. For instance, the evolution of the coordination levels may be considered as a resource allocation process, in which agents decide to share a percentage of a resource they own with their neighbors. Similarly, in an opinion dynamics setup, the coordination levels may reflect the beliefs of the agents, e.g.,  the information state of vehicles in a robot team. 

The agents adjust their coordination levels based on interactions with their neighbors, according also to their tendency to create a link with other agents or drop an existing one. The evolution of an agent's coordination level may be described by the following dynamic consensus protocol: 
\begin{equation}
\label{eq:wei_cons_pro}
\begin{array}{r l}
\dot{x}_{i,t}=&
 \sum_{j \in \mathcal{N}_{i,t^-}^{(1)}}\chi_{ij,t}^{m}(x_{j,t}-x_{i,t})  \\ &+  \sum_{k \in \mathcal{N}_{i,t^-}^{(2)}}\chi_{ik,t}^{c}(x_{k,t}-x_{i,t}),
\end{array}
\end{equation}
where $\chi_{ij,t}^{m},\chi_{ik,t}^{c}$ are $0-1$ variables that respectively indicate whether to: a) \emph{maintain} an existing link (i.e., a link that is active `right before' time $t$, equivalently $(i,j)$ with $j\in \mathcal{N}_{i,t^-}^{(1)}$), if $\chi_{ij,t}^{m}=1$ ($\chi_{ij,t}^{m}=0$ means that the link is dropped); and b) \emph{create} a new link $(i,k)$ with $k\in \mathcal{N}_{i,t^-}^{(2)}$, if $\chi_{ik,t}^{c}=1$. 
Clearly, we set $\chi_{ji,t}^{m} \equiv \chi_{ij,t}^{m}, \chi_{ki,t}^{c} \equiv \chi_{ik,t}^{c}$. 
The decisions are Bernoulli random variables with respective `success' probabilities (the probability of the value $1$) given by $0\leq w_{ij,t}^{m}\leq 1$, and $0 \leq w_{ik,t}^{c}\leq 1$. 

In this paper, the decision rules are \emph{state-dependent} and time-invariant, i.e., $\chi_{ij,t}^{m},\chi_{ik,t}^{c}$ are independent Bernoulli random variables with success probabilities that depend on the coordination levels of the two neighbors: 
$w_{ij,t}^{m} = w_{ij,t}^{m} (x_{i,t},x_{j,t})$, $w_{ik,t}^{c} = w_{ik,t}^{c} (x_{i,t},x_{k,t})$, cf.~\eqref{eq:weidrop},~\eqref{eq:weicreate} for their exact definition. 

The following remark underlines the inapplicability of previous analysis~\cite{olfati} in our setting.
\begin{Remark}
	The graph corresponding to the Laplacian matrix $\L_t$ may be disconnected. 
\end{Remark}

Indeed, since decisions are probabilistic, there is a positive probability that the resulting graph is disconnected (even the event of an empty edge set has positive probability) at each given time instant. 

We may stack the decisions $\{\chi_{ij,t}^{m},\chi_{ij,t}^{c}\}$ into a corresponding Laplacian matrix $\L = \L(\x_t,\Ed_{t^-},\omega)\equiv \L_t$ with entries $\{l_{ij}\}_{i,j\in \V}$ (dropping time dependency for notational simplicity) defined by: 
\begin{eqnarray*}
l_{ij}=l_{ji} := &- \left(1_{\{j \in \N_i^{(1)}\}}\chi_{ij}^{m} + 1_{\{j \in \N_i^{(2)}\}}\chi_{ij}^{c}\right),\\
l_{ii} := &-\sum_{j\ne i} l_{ij},
\end{eqnarray*}
where $1_{\{\cdot\}}$ is the $0-1$ indicator function (1 if the event holds and 0 else). We proceed to write the update rule in matrix form as follows: 
\begin{equation}\label{eq:state_evolution}
\dot{\x}_t = -\L_t\x_t.
\end{equation}
We call this the \emph{state evolution} equation; note that, by its very definition, it constitutes a continuous Markov Decision Process (MDP).  

The following proposition shows that all coordination levels are guaranteed to remain in the interval $[0,1]$ if they are initialized in $[0,1]$, i.e., it establishes that the set $[0,1]^n$ is \emph{forward invariant}.
\begin{Proposition}\label{prop:invariance}
	Suppose $\x_0 \in [0,1]^n$. Then, under state evolution~\eqref{eq:state_evolution},  $\x_t \in [0,1]^n$ for all $t>0$, i.e., $[0,1]^n$ is forward-invariant. 
\end{Proposition}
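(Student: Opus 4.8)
\noindent\emph{Proof strategy.}\ The plan is to deduce the claim from a single \emph{positivity} property of the linear time-varying system~\eqref{eq:state_evolution}, using two elementary structural features of the (random) matrix $\L_t$ that hold \emph{pathwise}, i.e., for every $\omega$ and every realization, irrespective of how $\L_t$ is generated: (i) its off-diagonal entries are nonpositive, $l_{ij}\in\{-1,0\}$ for $i\ne j$, so that $-\L_t$ is a Metzler matrix; and (ii) its row sums vanish, $\L_t\one=\zero$, which is immediate from $l_{ii}=-\sum_{j\ne i}l_{ij}$. From (i) I would first argue that the nonnegative orthant $\R^n_{\ge0}$ is forward-invariant for~\eqref{eq:state_evolution}, which gives the lower bound $\x_t\ge\zero$; I would then obtain the upper bound $\x_t\le\one$ by applying the \emph{same} invariance to the complementary variable $\one-\x_t$, which by (ii) solves the same equation.

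For the positivity step I would work on a fixed sample path. Since every node degree is at most $n-1$, the constant $c:=n$ satisfies $cI-\L_t\ge0$ entrywise for all $t\ge0$; setting $\z_t:=e^{ct}\x_t$ turns~\eqref{eq:state_evolution} into $\dot\z_t=(cI-\L_t)\z_t$ with an entrywise-nonnegative, bounded coefficient matrix. Passing to the integral form $\z_t=\z_0+\int_0^t(cI-\L_s)\z_s\,ds$ and running the Picard iteration from the constant $\z^{(0)}\equiv\z_0=\x_0\ge\zero$, I would check by induction that every iterate is entrywise nonnegative (it equals $\z_0\ge\zero$ plus an integral of nonnegative quantities); since the iterates converge uniformly on compact time intervals to the unique solution $\z_t$, this yields $\z_t\ge\zero$, hence $\x_t\ge\zero$, for all $t\ge0$. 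Alternatively, when $\L_t$ is piecewise constant, over any window on which it is constant one may write $\x_t=e^{-\L_t(t-t_0)}\x_{t_0}$ and use that $e^{-\L_t s}=e^{-cs}e^{(cI-\L_t)s}$ is a nonnegative matrix for $s\ge0$, then concatenate across switching times; this also makes clear that existence and uniqueness of the solution on $[0,\infty)$ is not a concern, the dynamics being linear with bounded coefficients.

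For the upper bound, put $\y_t:=\one-\x_t$. By (ii), $\dot\y_t=-\dot\x_t=\L_t\x_t=\L_t(\one-\y_t)=-\L_t\y_t$, so $\y_t$ obeys the very same state-evolution equation, and $\y_0=\one-\x_0\ge\zero$ since $\x_0\le\one$; the positivity step applied to $\y$ then gives $\y_t\ge\zero$, i.e., $\x_t\le\one$. Combining the two bounds, $\zero\le\x_t\le\one$ for all $t>0$ on every sample path, which is the assertion. The crux is the positivity step: because $\L_t$ is time-varying and state-dependent (hence random) one cannot diagonalize it or invoke a fixed spectral decomposition, so the argument must exploit the Metzler structure together with the uniform bound on the degrees to control the flow uniformly in $t$. (An alternative route to the same conclusion would be to show directly that $\min_i x_{i,t}$ is nondecreasing and $\max_i x_{i,t}$ is nonincreasing, via the a.e.\ derivative of a finite max/min of the absolutely continuous coordinate trajectories; I find the complementary-variable reduction above slightly cleaner.)
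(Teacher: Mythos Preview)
Your proof is correct and takes a different route from the paper's. The paper argues by a direct boundary check: if some coordinate $x_{i,t}$ hits $1$ (resp.\ $0$) while all coordinates lie in $[0,1]$, then $\dot x_{i,t}=\sum_{j\ne i}(-l_{ij})(x_{j,t}-x_{i,t})\le 0$ (resp.\ $\ge 0$), so the value cannot leave the interval---essentially the Nagumo tangent-cone condition, stated informally and treated in two separate cases. You instead exploit that $-\L_t$ is Metzler to show the nonnegative orthant is forward-invariant (via the shift $\z_t=e^{ct}\x_t$ and a Picard iteration with an entrywise-nonnegative kernel), and then obtain the upper bound for free from $\L_t\one=\zero$ by passing to the complementary variable $\y_t=\one-\x_t$. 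Your reduction makes the two bounds perfectly symmetric and sidesteps any delicacy about several coordinates reaching the boundary at once or about jumps in $\L_t$, at the cost of a slightly heavier setup. The alternative you flag at the end---monotonicity of $\min_i x_{i,t}$ and $\max_i x_{i,t}$---is precisely the paper's argument.
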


\begin{proof}
The proof considers two cases: the first case considers a nodal value reaching the upper bound (1), and the second the lower bound (0). 

Case 1: Assume that for some $t\ge 0$, there exists $i\in \V$ with $x_{i,t}=1$ and that $\x_s\in [0,1]^n$ for all $s\le t$. Then, given that $x_{j,t} \in [0,1]$ for all $j\ne i$ it follows that
$$\dot{x}_{i} = \sum_{j\in\V\setminus\{i\}} -l_{ij} (x_{j,t}-1) \le 0,$$
because $x_{j,t} \le 1$ and $-l_{ij}\ge 0$. Therefore $x_i$ can never exceed the value 1.

Case 2: Assume that for some $t\ge 0$, there exists $i\in \V$ with $x_{i,t}=0$ and that $\x_s\in [0,1]^n$ for all $s\le t$. Since $x_{j,t} \in [0,1]$ for all $j\ne i$ it follows that
$$\dot{x}_{i} = \sum_{j\in\V\setminus\{i\}} -l_{ij} x_{j,t} \ge 0,$$
therefore $x_i$ can never go below 0.
\end{proof}
\begin{Remark}
	The forthcoming analysis applies irrespectively of the assumption that the values $\x_t \in [0,1]^n$, i.e., for arbitrary initial conditions $\x_0$. This assumption is adopted solely for the sake of interpretability in the context of evolutionary games.
\end{Remark}

\subsection{Evolutionary game}
In this section, we provide a rule for selecting the weights (i.e., probabilities) $w_{ij,t}^{m}, w_{ij,t}^{c}$ based on a particular evolutionary game. We use \emph{Continuous Actions Iterative Prisoner's Dilemma} (CAIPD)~\cite{caipd} to define the fitness function of a given node and illustrate how the weights are selected. 

In CAIPD, there are $n$ agents that choose their coordination levels given their neighbors' decisions. Each agent $i$ has to pay a fee that is related to its coordination level and gains a reward related to the coordination levels of its neighbors: the higher the coordination level of agent $i$ and the coordination levels of its neighbors are, the higher the cost and reward are,  respectively.

Formally, the reward of agent $i$ when it sets its coordination level to $x_{i}$ is defined using the following fitness function (where we drop dependency from time $t$ henceforth, since the definition of fitness in CAIPD is time-independent): 

\begin{equation}
f_{i}(\vec{x})=b\sum_{j \in \mathcal{N}_{i}^{(1)}}{x_{j}}-c\abs{\mathcal{N}_{i}^{(1)}}x_{i},
\label{eq:fitness}
\end{equation}
where $b>c>0$ are constants (i.e., we assume that the gain--per unit of coordination--from cooperating with another agent $b$ is higher than the per-unit loss $c$).

The change in the fitness function of agent $i$ when it creates or drops a link, denoted by $\tilde{f}^{c}_{ij}$ (for $j\in \N_{i}^{(2)}$) and $\tilde{f}^{d}_{ij}$ (for $j\in \N_{i}^{(1)}$) respectively, is determined by:
\begin{eqnarray}
\label{eq:fcreate}
\tilde{f}^{c}_{ij}(\vec{x})&= & b\sum_{k \in \mathcal{N}^{(1)}_{i}\cup\{j\}} x_{k}-c\left(\abs{\mathcal{N}^{(1)}_{i}} +1 \right) x_{i}\nonumber\\
&-&\left(b\sum_{k \in \mathcal{N}_{i}^{(1)}}{x_{k}}-c\abs{\mathcal{N}_{i}^{(1)}}x_{i}\right)\nonumber\\
&=&bx_{j}-cx_{i},
\end{eqnarray}
%
\begin{eqnarray}
\label{eq:fdrop}
\tilde{f}^{d}_{ij}(\vec{x}) &= & b\sum_{k \in \mathcal{N}^{(1)}_{i} \setminus \{j\}} x_{k}-c\left(\abs{\mathcal{N}^{(1)}_{i}} -1 \right) x_{i} \nonumber\\
&-& \left(b\sum_{k \in \mathcal{N}_{i}^{(1)}}{x_{k}}-c\abs{\mathcal{N}_{i}^{(1)}}x_{i}\right) \nonumber\\
&=& cx_{i}-bx_{j}.
\end{eqnarray}

In the evolutionary game, a link may be created/dropped if both agents desire to coordinate or not based on the corresponding  increment (or decrement) of their individual fitness functions. Essentially, if both agents benefit from maintaining/creating a link, the corresponding probability must be higher than the case where only one node benefits or when the fitness of both agents is decreased. In \cite{ecc} a sigmoid function was used to determine the weights  $w_{ij,t}^{m}$ and $w_{ij,t}^{c}$.  In our formulation, the weights correspond to the probabilities that agent $i$ maintains (one minus the probability that it drops) or creates link $(i,j)$, respectively. 
Following a similar approach, the weights 
are selected as (where we once again drop time dependency since the weight-rule is state-dependent but time-invariant):

\begin{equation}
\begin{array}{rl}
\label{eq:weidrop}
w_{ij}^{m}=&\frac{1}{2}-\frac{1}{2}\tanh\left( \tilde{f}_{ij}^{d}(\vec{x})+\tilde{f}_{ji}^{d}(\vec{x})\right )\\
=& \frac{1}{2}-\frac{1}{2}\tanh\left( (c-b)(x_{i}+x_{j})\right ),
\end{array}
\end{equation}
\begin{equation}
\begin{array}{rl}
\label{eq:weicreate}
w_{ij}^{c}=&\frac{1}{2}+\frac{1}{2}\tanh\left( \tilde{f}_{ij}^{c}(\vec{x})+\tilde{f}_{ji}^{c}(\vec{x})\right )\\
=& \frac{1}{2}+\frac{1}{2}\tanh\left((b-c)(x_{i}+x_{j})\right ).
\end{array}
\end{equation}

Note that, by definition, the two values are equal and lower-bounded by $\frac{1}{2}$ (in light of the fact that $\x_t\in[0,1]^n$, for all $t\ge 0$; cf. Proposition~\ref{prop:invariance}).
We define the \emph{weighted Laplacian} matrix $\W$ with entries (again dropping time dependency for notational simplicity) given by:
\begin{eqnarray}\label{eq:w_define}
	w_{ij}=w_{ji} := &- \left(1_{\{j \in \N_i^{(1)}\}}w_{ij}^{m} + 1_{\{j \in \N_i^{(2)}\}}w_{ij}^{c}\right),\\
	w_{ii} := &-\sum_{j\ne i} w_{ij},\nonumber
\end{eqnarray}

\section{Convergence analysis}\label{sec:proof}
Formally, for $t>0$, $\L_t$ is an $\F_{t^-}$-measurable random matrix where the $\sigma-$algebra $\F_{t^-}$ is defined by $\F_{t^-} := \sigma(\cup_{s<t} \sigma(\Ed_s),\x_t) = \sigma( \cup_{s<t}\sigma (\L_s),\x_t)$; $\sigma(\cdot)$ denotes the completion (i.e., adding all subsets of sets of zero measure~\cite{stroock}) of the $\sigma-$algebra generated by its argument (a random variable). Simply said, $\F_{t^-}$ is a formal way of describing the available information pertaining to the topology `right before' time $t$, along with the value $\x_t$  
and it induces a \emph{filtration}~\cite{stroock}, i.e., $\F_{s^-} \subseteq \F_{t^-}$ for $s\le t$. 
In the sequel, we use the notation $\E_t[\cdot]:=\E[\cdot|\F_{t^-}]$;
we assume the initial state $\x_0$ and topology $\Ed_0$ are deterministic and known. 

It follows that
\begin{equation}\label{eq:cond_exp}
\W_t = \E_t[\L_t].
\end{equation}

Note that~\eqref{eq:state_evolution} is a `stochastic differential equation\footnote{We use this term in brackets as the differential equation is driven by the random Laplacian matrix and not a Brownian motion~\cite{stroock}.}' which is equivalent to the integral equation:
\begin{equation}\label{eq:integral_equation}
\x_t = \x_0  -\int_{0}^{t}\L_s\x_s ds.
\end{equation}

The following lemma characterizes the evolution of the 
mean: 
$$\bar{\x}_t:=\E[\x_t].$$
\begin{Lemma}[Evolution of the mean]\label{lemma:mean}
	Under state evolution~\eqref{eq:state_evolution}, the mean value follows the differential equation:
	\begin{equation}\label{eq:mean_evolution}
	\dot{\bar{\x}}_t = -\E[\W_t \x_t].
	\end{equation}
\end{Lemma}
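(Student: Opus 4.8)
The plan is to pass to the integral form~\eqref{eq:integral_equation} of the state evolution, take expectations, exchange expectation and integration by a Fubini-type argument, collapse the integrand using the tower property of conditional expectation together with~\eqref{eq:cond_exp}, and finally differentiate in $t$. Concretely, from~\eqref{eq:integral_equation} I would first write
\begin{equation*}
\bar{\x}_t = \x_0 - \E\!\left[\int_0^t \L_s\x_s\,ds\right],
\end{equation*}
and justify moving $\E$ inside the integral. This requires joint measurability of $(s,\omega)\mapsto \L_s(\omega)\x_s(\omega)$ --- which follows from adaptedness of $\x_s$ and $\L_s$ to $\{\F_{t^-}\}$ and path regularity of $\x$ --- and an integrability bound. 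The latter is immediate: by Proposition~\ref{prop:invariance}, $\x_s\in[0,1]^n$, while each off-diagonal entry of $\L_s$ lies in $\{0,-1\}$ and each diagonal entry is at most the maximal feasible degree $\max_{i}\abs{\N_i}\le n-1$; hence $\|\L_s\x_s\|$ is bounded by a deterministic constant and $\int_0^t\E\|\L_s\x_s\|\,ds<\infty$. Fubini's theorem then gives $\bar{\x}_t = \x_0 - \int_0^t \E[\L_s\x_s]\,ds$.

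Since $\x_s$ and $\L_s$ are $\F_{s^-}$-measurable, the tower property yields $\E[\L_s\x_s] = \E\big[\E_s[\L_s\x_s]\big] = \E\big[\E_s[\L_s]\,\x_s\big] = \E[\W_s\x_s]$, the last step being exactly~\eqref{eq:cond_exp}; substituting, $\bar{\x}_t = \x_0 - \int_0^t \E[\W_s\x_s]\,ds$. To upgrade this to $\dot{\bar{\x}}_t = -\E[\W_t\x_t]$ for every $t>0$ it suffices, by the fundamental theorem of calculus, to show that $t\mapsto\E[\W_t\x_t]$ is continuous. Here I would exploit that $\tanh$ is odd, so $w_{ij}^{m}=w_{ij}^{c}$ by~\eqref{eq:weidrop}--\eqref{eq:weicreate} (as already noted below~\eqref{eq:weicreate}); consequently, by~\eqref{eq:w_define}, $\W_t$ is the weighted Laplacian of the \emph{fixed} feasible graph $\G'$ of Assumption~\ref{ass:connectivity} with edge weights $\tfrac12+\tfrac12\tanh\big((b-c)(x_{i,t}+x_{j,t})\big)$, a smooth bounded function of $\x_t$. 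Since each sample path $t\mapsto\x_t$ is continuous --- indeed Lipschitz, as $\|\x_t-\x_s\|=\|\int_s^t\L_u\x_u\,du\|$ is controlled by the deterministic bound above --- the map $t\mapsto\W_t\x_t$ is continuous and uniformly bounded, and dominated convergence delivers continuity of $t\mapsto\E[\W_t\x_t]$.

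The crux is the first step: making rigorous the joint measurability and integrability needed for Fubini's theorem on the process $\L_s\x_s$, bearing in mind that~\eqref{eq:state_evolution} is a `differential equation' driven by the \emph{discontinuous} (jump) Laplacian process and that conditioning is taken with respect to the pre-$t$ $\sigma$-algebra $\F_{t^-}$; once this interchange is in place, the tower-property collapse and the differentiation are routine. A milder secondary point is the path-continuity of $\x$, used only to promote the conclusion from a.e. $t$ to every $t$, which is a direct consequence of the boundedness of the right-hand side of~\eqref{eq:state_evolution}.
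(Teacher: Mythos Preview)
Your proposal follows essentially the same route as the paper: pass to the integral form~\eqref{eq:integral_equation}, apply Fubini using the boundedness of $\x_s$ (Proposition~\ref{prop:invariance}) and of the entries of $\L_s$, then invoke the tower property and~\eqref{eq:cond_exp}; you additionally supply the differentiability argument (continuity of $t\mapsto\E[\W_t\x_t]$ via dominated convergence), which the paper leaves implicit. One small slip: you should not say that $\L_s$ is $\F_{s^-}$-measurable---if it were, $\E_s[\L_s]=\L_s$ and~\eqref{eq:cond_exp} would collapse; the point is precisely that $\x_s$ \emph{is} $\F_{s^-}$-measurable (so it factors out of $\E_s[\cdot]$) while $\L_s$ is \emph{not}, its conditional law given $\F_{s^-}$ being determined by $\x_s$ and $\Ed_{s^-}$ with $\E_s[\L_s]=\W_s$.
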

\begin{proof}
Taking expectation in~\eqref{eq:integral_equation}	yields
\begin{eqnarray*}
	\bar{\x}_t & =& \bar{\x}_0  -\E[\int_{0}^{t}\L_s\x_s ds]\\
	& = &\bar{\x}_0  - \int_{0}^{t}\E[\L_s\x_s] ds\\
	& = & \bar{\x}_0  - \int_{0}^{t}\E[\E_s[\L_s\x_s]] ds\\
	& = & \bar{\x}_0  - \int_{0}^{t}\E[\W_s\x_s] ds
\end{eqnarray*}

The first equality uses the definition of $\bar{x}_t$. The second one invokes Fubini's theorem~\cite{royden} (since $\x_t\in [0,1]^n$ and $\L$ is a finite dimensional matrix with bounded entries). The third equality uses the towering property of expectation~\cite{stroock}.  
The fourth uses the fact that $\x_s$ is  $\F_{s^-}-$measurable  
along with~\eqref{eq:cond_exp}.
\end{proof}

The next theorem establishes the convergence of our scheme:
\begin{Theorem}[Average consensus]\label{thm:convergence}
	Under Assumption~\ref{ass:connectivity} and state evolution~\eqref{eq:state_evolution} the system reaches average consensus:
	$$\lim_{t\to \infty} \x_t = Ave(\x_0) \one \ \text{a.s. and in m.s.},$$
	for any $\x_0\in[0,1]^n$, where $Ave(\x_0):=\frac{1}{n}\one^\top\x_0$ is the average of the initial nodal values.
	\hide{
	$$\lim_{t\to \infty} \x_t = Ave(\x_0) \one \ \text{a.s. and in } \ell_p(\Omega,\F,\P),$$
	for any $\x_0\in[0,1]^n$ and for any $p\in [1,\infty]$, where $Ave(\x_0):=\frac{1}{n}\one^\top\x_0$ is the average of the initial nodal values, and for a random vector $\z\in \R^n$
	$$\|\z\|_{\ell_p(\Omega,\F,\P)} := \E[\|\z\|_p].$$}
	Furthermore, the m.s. convergence is exponential in expectation,
	with the expected rate lower-bounded by $\lambda_2(\G')>0$. 
\end{Theorem}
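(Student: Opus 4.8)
\medskip
\noindent\textbf{Proof strategy.}
The plan is to track the disagreement vector $\db_t := \x_t - Ave(\x_0)\one$ through a quadratic stochastic Lyapunov function. First I would show that the network average is conserved pathwise: since $\L_t$ is, for each realization of the decisions, a genuine graph Laplacian, we have $\one^\top\L_t = \zero^\top$, hence $\frac{d}{dt}(\one^\top\x_t) = -\one^\top\L_t\x_t = 0$ and $Ave(\x_t) = Ave(\x_0)$ for all $t\ge 0$. In particular $\one^\top\db_t \equiv 0$, and since $\L_t\one = \zero$ the disagreement obeys the same linear dynamics $\dot{\db}_t = -\L_t\db_t$.

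Next, set $V_t := \|\db_t\|^2$. Along each sample path $\x_t$ is absolutely continuous (it solves a linear ODE with bounded, piecewise-continuous coefficients), so $\dot V_t = -2\,\db_t^\top\L_t\db_t \le 0$ for a.e.\ $t$, because $\L_t\succeq 0$. Thus $V_t$ is a.s.\ nonincreasing and converges a.s.\ to some limit $V_\infty\ge 0$.

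To identify $V_\infty = 0$ and obtain the rate, I would pass to expectations. From the integral identity $V_t = V_0 - 2\int_0^t \db_s^\top\L_s\db_s\,ds$, applying Fubini (the integrand is bounded since $\x_s\in[0,1]^n$), the tower property, the $\F_{s^-}$-measurability of $\db_s$, and~\eqref{eq:cond_exp}, I get $\E[V_t] = V_0 - 2\int_0^t \E\big[\db_s^\top\W_s\db_s\big]\,ds$. The structural observation that drives the argument is that every feasible edge $(i,j)\in\Ed'$ is at each instant either active or inactive, and in either case its weight in $\W_s$ equals $w_{ij}^m = w_{ij}^c \ge \tfrac12$ by~\eqref{eq:weidrop}--\eqref{eq:weicreate} and Proposition~\ref{prop:invariance}; hence $\z^\top\W_s\z = \sum_{(i,j)\in\Ed'} w_{ij}(z_i-z_j)^2 \ge \tfrac12\,\z^\top\L_{\G'}\z$ for every $\z$, where $\L_{\G'}$ is the (fixed) Laplacian of $\G'$, i.e.\ $\W_s\succeq\tfrac12\L_{\G'}$ in the PSD order. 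Since $\db_s\perp\one$ and $\G'$ is connected by Assumption~\ref{ass:connectivity}, $\db_s^\top\L_{\G'}\db_s\ge\lambda_2(\G')\|\db_s\|^2$, so $\E[V_t]\le V_0 - \lambda_2(\G')\int_0^t\E[V_s]\,ds$; Grönwall's inequality then gives $\E[V_t]\le \|\db_0\|^2\,e^{-\lambda_2(\G')t}$, which is precisely exponential mean-square convergence with rate at least $\lambda_2(\G')>0$. Finally, $V_\infty\ge 0$ together with monotone convergence for the decreasing family $V_t$ gives $\E[V_\infty] = \lim_t\E[V_t] = 0$, so $V_\infty = 0$ a.s.; this yields $\x_t\to Ave(\x_0)\one$ both almost surely and in mean square.

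The main obstacle I expect is not any individual inequality but the measure-theoretic setup of the continuous-time randomized dynamics: making the ``stochastic differential equation''~\eqref{eq:state_evolution}/\eqref{eq:integral_equation} precise enough to justify the a.e.\ chain rule for $V_t$ on each path, the interchange of expectation and time integration, and the use of~\eqref{eq:cond_exp} at the instant $\L_t$ is drawn (so that $\db_t$ is genuinely $\F_{t^-}$-measurable there). Once those foundations are laid, the algebraic heart --- conservation of the average, pathwise monotonicity of $\|\db_t\|^2$, and the comparison $\W_t\succeq\tfrac12\L_{\G'}$ that injects the fixed connected ``backbone'' $\G'$ --- is routine.
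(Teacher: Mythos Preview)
Your proof is correct and follows essentially the same route as the paper: conservation of the average, the quadratic Lyapunov function on the disagreement vector, passage to expectations via Fubini and the tower property together with~\eqref{eq:cond_exp}, the comparison $\W_s\succeq\tfrac12\L_{\G'}$ (equivalently $\lambda_2(\W_s)\ge\tfrac12\lambda_2(\G')$) from the uniform lower bound on the weights, and Gr\"onwall for the exponential rate. The only cosmetic difference is that the paper phrases the a.s.\ part via the supermartingale convergence theorem and Fatou's lemma, whereas you use pathwise monotonicity of $V_t$ and then identify $\E[V_\infty]=0$ by monotone/dominated convergence; both are valid and yield the same conclusion.
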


\begin{proof}
	Since $\L$ is symmetric and $\L\one =\zero$, pre-multiplying~\eqref{eq:integral_equation} by $\one^\top$ gives $\one^\top\x_t = \one^\top\x_0$ for all $t\ge 0$, i.e., the sum (and therefore the average) of entries is constant over time. We define the \emph{disagreement} vector $\e_t :=\x_t- Ave(\x_0)\one$: it follows that $\e_t\perp\one$, i.e., $\one^\top\e_t = \zero$ for all $t\ge 0$. Consider the Lyapunov function $V(\e) = \frac{1}{2}\|\e\|_2^2 = \frac{1}{2}\e^\top\e$. Under~\eqref{eq:state_evolution} it follows that:
	$$\dot{V}(\e_t) = -\e_t^\top\L_t\e_t,$$
	where we have used the chain rule and the property that $\L_t\one=\zero$. Note that the drift satisfies $-\e_t^\top\L_t\e_t\le 0$ since $\L_t$ is positive semidefinite. 
	Using the exact same line of analysis as in Lemma~\ref{lemma:mean} we get: 
	$$\E[V(\e_t)] = V(\e_0) - \int_{0}^{t}\E[\e_s^\top\W_s\e_s]ds,$$
	or more generally:
	$$\E_s[V(\e_t)] = V(\e_s) - \int_{s}^{t}\E_s[\e_\tau^\top\W_\tau\e_\tau]d\tau,$$
	Therefore $V(\e_t)$ is a bounded (cf. Proposition~\ref{prop:invariance})  $(\Omega,\mathcal{F}_{t^-},\P)-$supermartingale, and therefore converges a.s. by the supermartingale convergence theorem~\cite{stroock}. Denote the limit by $\e_\infty(\omega)$; we will establish that $\e_\infty=\zero$ a.s. Note that $\W(\x)$ is a (state-dependent) weighted Laplacian on the graph $\G' = (\V, \Ed')$ which is connected (cf. Assumption~\ref{ass:connectivity}), therefore $\lambda_2(\G')>0$. Furthermore, the edge weights are positive and bounded away from zero uniformly over $\x$; to see this note that~\eqref{eq:weidrop},~\eqref{eq:weicreate} and the fact that $\x\in [0,1]^n$ imply that 
	$$\min (w_{ij}^m,w_{ij}^c) \ge \frac{1}{2}.$$
	This also shows that $\lambda_2(\W) \ge \frac{1}{2}\lambda_2(\G')>0$. Since $\e_t\perp \one$ for all $t\ge 0$, and by the definition of $V(\cdot)$, we have: 
	$$\E[V(\e_t)] \le V(\e_0) - \lambda_2(\G')\int_{0}^{t}\E[V(\e_s)]ds,$$ 
	consequently 
	$$\E[V(\e_t)] \le V(\e_0)e^{-\lambda_2(\G') t},$$
	i.e., 
	$$\lim_{t\to \infty}\E[V(\e_t)]=0.$$
	
	This establishes m.s. convergence to $\zero$, with expected exponential convergence with rate lower-bounded by $\lambda_2(\G')$; 
	a.s.-convergence follows by the supermartingale convergence theorem and Fatou's lemma~\cite{royden}.
\end{proof}
\begin{Corollary}[Convergence in expectation]
	Under Assumption~\ref{ass:connectivity} and state evolution~\eqref{eq:state_evolution}:
	$$\lim_{t\to \infty} \E[\x_t] = Ave(\x_0) \one.$$
\end{Corollary}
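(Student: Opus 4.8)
The plan is to deduce this statement directly from Theorem~\ref{thm:convergence}, rather than from the mean-evolution equation of Lemma~\ref{lemma:mean}. The latter route is deceptively hard: $\E[\W_t\x_t]$ does not factor as a function of $\bar{\x}_t$ alone, since the random weighted Laplacian $\W_t$ and the state $\x_t$ are correlated, so one cannot close a self-contained linear ODE for $\bar{\x}_t$ and read off its limit. Instead, the point is that Theorem~\ref{thm:convergence} already supplies convergence in a mode strong enough to imply convergence of the mean.

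First, recall from the proof of Theorem~\ref{thm:convergence} that $\E[V(\e_t)] = \tfrac12\E[\|\e_t\|_2^2]\to 0$, i.e., the disagreement vector $\e_t := \x_t - Ave(\x_0)\one$ converges to $\zero$ in mean square. Next, apply Jensen's inequality (equivalently, Cauchy--Schwarz) to obtain
$$\|\E[\e_t]\|_2 \le \E[\|\e_t\|_2] \le \sqrt{\E[\|\e_t\|_2^2]} \longrightarrow 0 \quad \text{as } t\to\infty.$$
Since $Ave(\x_0)\one$ is deterministic, $\E[\x_t] = \E[\e_t] + Ave(\x_0)\one$, and letting $t\to\infty$ gives $\lim_{t\to\infty}\E[\x_t] = Ave(\x_0)\one$, as claimed. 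As a bonus, the exponential bound $\E[V(\e_t)]\le V(\e_0)e^{-\lambda_2(\G')t}$ from Theorem~\ref{thm:convergence} upgrades this to an exponential rate $\tfrac12\lambda_2(\G')$ for $\|\E[\x_t] - Ave(\x_0)\one\|_2$.

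I do not anticipate a genuine obstacle here; the only step requiring (minor) care is the interchange of limit and expectation, and this is exactly what the Jensen/Cauchy--Schwarz estimate above provides. Two alternative routes give the same conclusion: one may combine the a.s.\ convergence $\e_t\to\zero$ from Theorem~\ref{thm:convergence} with the uniform bound $\x_t\in[0,1]^n$ of Proposition~\ref{prop:invariance} and invoke the dominated convergence theorem; or apply Fatou's lemma to $\|\e_t\|_2$. In all cases the boundedness granted by Proposition~\ref{prop:invariance} makes the argument immediate.
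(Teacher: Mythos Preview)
Your argument is correct and is essentially the same as the paper's: the paper also invokes Jensen's inequality to bound $\|\E[\e_t]\|_2^2 \le \E[\|\e_t\|_2^2]$, deduces $\E[\e_t]\to\zero$ from the mean-square convergence in Theorem~\ref{thm:convergence}, and concludes via the definition of $\e_t$. Your write-up is a bit more expansive (the intermediate $\E[\|\e_t\|_2]$ step, the explicit rate, and the remarks on alternative routes), but the core idea is identical.
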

\begin{proof}
	By Jensen's inequality, $\|E[\e_t]\|_2^2 \le \E[\|\e_t\|_2^2]$, therefore 
	$$\lim_{t\to \infty} \E[\e_t] = \zero,$$
	and the result follows by the definition of $\e_t$.
\end{proof}
\section{Experiments}\label{sec:sim}
In this section, we present simulation studies that attest our convergence results.   
We have employed the \emph{small world network} model~\cite{watts1999small}, i.e., a Bernoulli random graph in which any two agents are allowed to interact with a fixed probability $p_1$; this process generates the neighborhood sets $\{\N_i\}$ and therefore the graph $\G' = (\V, \Ed')$, cf. Assumption~\ref{ass:connectivity}. We took the network size $n=1000$ in our experiments and set $p_1=0.2$; we repeated the experiment until a connected graph $\G'$ was obtained as required by Assumption~\ref{ass:connectivity}. For initialization, we chose $\x_0$ uniformly distributed on $[0,1]^n$ and selected the active neighbor sets $\N_{i}^{(1)}$ as follows: for each $i$, neighbors in $\N_i$ were selected to be active with probability $p_2$ (independently from one another); we took $p_2=0.2$. Last, we set $b=5,c=4$ in~\eqref{eq:weidrop},~\eqref{eq:weicreate}.

For numerical simulation of the state evolution we have performed uniform discretization of~\eqref{eq:state_evolution} with a step-size $\Delta$, i.e., we set $t=\Delta k$ where $k$ is the discrete iterate counter and run:
$$\x_{k+1} = \x_k -\Delta \L_k \x_k.$$
We chose the step-size $\Delta=\frac{1}{n}$ which guarantees that the spectral radius of $(I - \Delta \L_k)$ is less than or equal to 1 (since the eigenvalues of the Laplacian are upper bounded by $n$
~\cite{graph_theory}).  

Figure \ref{fig:fig1} depicts the evolution of the coordination levels (for a single small world network and initialization of $\x_0$): it is evident that all coordination levels converge to the average value. 
\begin{figure}
\includegraphics[scale=0.55]{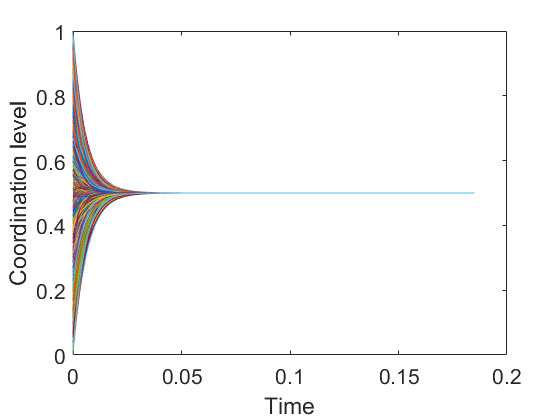}
\caption{Time evolution of coordination levels of 1000 agents.}
\label{fig:fig1}
\end{figure}
Figure \ref{fig:fig2} illustrates a logarithmic plot of the evolution of the normalized norm of the disagreement vector $\frac{\|\e_t\|_2}{{\|\e_0\|_2}}$, referred to as relative error, averaged over 1000 experiments (random topologies $\G'$ and initializations of $\x_0$); it is evident that the convergence is exponential, in full alliance with Theorem~\ref{thm:convergence}.
\begin{figure}
	\includegraphics[scale=0.47]{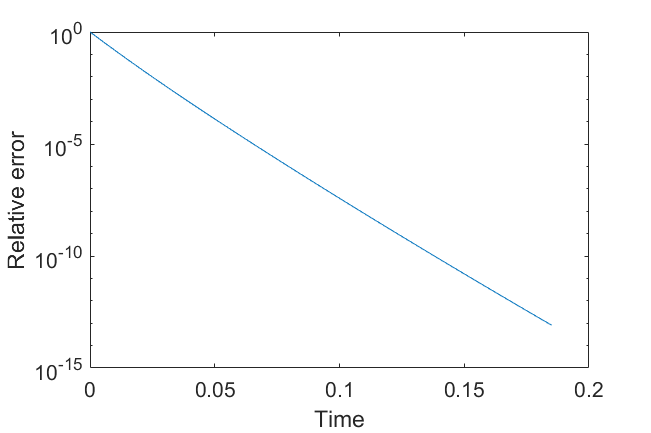}
	\caption{Normalized disagreement vector norm over time.}
	\label{fig:fig2}
\end{figure}

\section{Conclusions and future work}\label{sec:conclusions}
We have proposed and analyzed average consensus on evolutionary graphs. Linear consensus iterations are performed on a dynamic graph, where the topology is determined by an evolutionary game in which agents can randomly create new links or drop existing ones in a selfish manner based on their fitness function. We have established a.s. and m.s. average consensus with expected exponential rate regardless of the initial topology and agents' values. 

Our future work will focus on  devising distributed methods for multi-agent optimization over evolutionary graphs, as well as on extending the analysis to directed graphs and discrete-time iterations.

\bibliographystyle{IEEEtran}
\bibliography{references}

\end{document}